\theoremstyle{plain}
\theoremstyle{definition}
\newtheorem{theorem}{Theorem}[section]
\newtheorem{lemma}[theorem]{Lemma}
\newtheorem{note}[theorem]{Note}
\newtheorem{convention}[theorem]{Convention}
\theoremstyle{remark}
\numberwithin{equation}{section}
\newcommand{\SP}{\: \: \: \: \:}
\title{Top--designs in the category of Fort spaces}
\author[M. Pourattar, F. Ayatollah Zadeh Shirazi]{Mehrnaz Pourattar, Fatemah Ayatollah Zadeh Shirazi}
\begin{document}
%%%%%%%%%%%%%%%%%%%%%%%%%%%%%%%%%%%% abstract
\begin{abstract} 
In infinite topological Fort space $X$, for nonempty
subsets $C,D$ of $X$ in the following text we answer to this question
``Is there any $\lambda$ and Top--design $C-(X,D,\lambda)$ of
type $i$?'' for $i=1,2,3,4$. We prove
there exist $\lambda$ and  $C-(X,D,\lambda)$, Top--design of type 2 (resp.
type 4) if and only if $C$ can be embedded into $D$.
\end{abstract}
\maketitle
%%%%%%%%%%%%%%%%%%%%%%%%%%%%%%%%%%%% MSC
\noindent {\small {\bf 2010 Mathematics Subject Classification:}  54,  05B05 \\
{\bf Keywords:}} Fort space, Generalized--design, Top--design.
%%%%%%%%%%%%%%%%%%%%%%%%%%%%%%%%%%%%
\section{Introduction}
Suppose $S$ is a finite set with $n\geq2$ elements (so $S$ is an $n-$set) and $\mathbb A$ is a collection
of $k-$subsets of $S$ such that each $t-$subset of $S$ occurs exactly in $\lambda$ elements of $\mathbb A$,
then $\mathbb A$ is favorit and well studied traditional $t-(n,k,\lambda)$ combinatorial design ($t<n$ and $\lambda\geq1$)
(see \cite{999, 888}). However these finite traditional designs has been generalized in ``infinite designs'' in \cite{888}, also
generalized designs have been introduced for the first time in \cite{111} as a generalization of combinatorial designs in different
mathematical categories like category of well--ordered sets, topological spaces, etc..
We use term Top-design when our reference category is the category of topological spaces.
\\
Using the same notations as in \cite{111}, in topological space $X$ for
nonempty subsets $C,D$ of $X$, nonzero cardinal number $\lambda$
and collection $\mathbb{A}$ of subsets of $X$ using statements
(where by $S\approx T$ we mean $S$ and $T$ are homeomorphic spaces):
\begin{itemize}
\item[I.] $\forall B\in{\mathbb A}\:\:\:(B\approx D)$
\item[II.] $\forall B\in{\mathbb A}\:\:\:(B\approx D\wedge X\setminus B\approx X\setminus D)$
\item[III.] $\forall E\subseteq X\:\:\:(E\approx C\Rightarrow{\rm card}(\{B\in{\mathbb A}:E\subseteq B\})=\lambda)$
\item[IV.] $\forall E\subseteq X\:\:\:((E\approx C \wedge X\setminus E\approx X\setminus C)\Rightarrow{\rm card}(\{B\in{\mathbb A}:E\subseteq B\})=\lambda)$,
\end{itemize}
we say $\mathbb{A}$
is a :
\begin{itemize}
\item  $C-(X,D,\lambda)$ Top-design of type 1, if (II) and (III)
\item  $C-(X,D,\lambda)$ Top-design of type 2, if (I)  and (III)
\item  $C-(X,D,\lambda)$ Top-design of type 3, if (II) and (IV)
\item  $C-(X,D,\lambda)$ Top-design of type 4, if (I) and (IV)
\end{itemize}
Let's mention that if $b\in X$, equip $X$ with topology $\{U\subseteq X:b\notin U \vee
(X\setminus U$ is finite$)\}$, then we say $X$ is a Fort space with
particular point $b$ \cite[Counterexample 24]{222}. One may find counterexamples regarding
$C-(\{\frac1n:n\geq1\}\cup\{0\},D,\lambda)$ Top-designs
in \cite{111}, note to the fact that $\{\frac1n:n\geq1\}\cup\{0\}$
(with induced topology of $\mathbb R$) is an infinite countable Fort space,
leads us to study other types of infinite Fort spaces in the approach
of Top-designs.
\begin{note}
Two Fort spaces are homeomorph if and only if they are in one--to--one correspondence.
Moreover in Fort space $X$ with particular point $b$ infinite subset $Y$ of $X$
as subspace topology has Fort topology if and only if $b\in Y$ (all finite subsets
of $X$ are finite discrete spaces and carry Fort topology structure).
\end{note}
\begin{convention}
In the following text suppose $X$ is an infinite Fort
space with the particular point $b$.
\end{convention}
 %%%%%%%%%%%%%%%%%note%%%%%%%%%%%%%%%%%%%%%%
%{ Note: Run the template twice for correct presentation of references.}
%%%%%%%%%%%%%%%%%%%%%%%%%%%%%%%%%%%%%%%%%%%%%%%
\section{Results in Top-designs on $X$}
In this section we study the existence of $C-(X,D,\lambda)$ for
different $C$s and $D$s.
%%%%%%%%%%%%%%%%%%%%%%%%%%%%%%%%%%%
\begin{lemma}\label{taha10}
For $U,V\subseteq X$ with $U\approx V$ and $X\setminus U\approx X\setminus V$ we have:
\begin{itemize}
\item[1.] $b\in U$ if and only if $b\in V$ (i.e., $U\cap\{b\}=V\cap\{b\}$),
\item[2.] for infinite $U$ with ${\rm card}(U)<{\rm card}(X)$
	and $H\subseteq X$ we have $U\approx H$ and $X\setminus U\approx X\setminus H$
	if and only if ${\rm card}(U)={\rm card}(H)$ and $U\cap\{b\}=H\cap\{b\}$.
\end{itemize}
\end{lemma}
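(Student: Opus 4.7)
The plan rests on a structural observation about subspaces of $X$: for any $W\subseteq X$, every point $x\in W$ with $x\neq b$ is isolated in $W$ (since $\{x\}$ is already open in $X$), so $W$ is a discrete space whenever $b\notin W$ or $W$ is finite; and if $W$ is infinite with $b\in W$, every neighborhood of $b$ in $W$ is cofinite in $W$, making $b$ the unique non-isolated point of $W$. Equivalently, $W$ fails to be discrete exactly when it is an infinite set containing $b$, and in that case $W$ is itself a Fort space with particular point $b$.

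For part 1, I split on whether $U$ is finite or infinite. When $U$, and hence $V$, is infinite, the homeomorphism $U\approx V$ alone suffices: $b\in U$ iff $U$ has a non-isolated point, iff $V$ does, iff $b\in V$. When $U$, and hence $V$, is finite, both are discrete, so the first homeomorphism carries no information about $b$; here I apply the same dichotomy to the infinite complements, invoking $X\setminus U\approx X\setminus V$ together with the chain $b\in U\Leftrightarrow b\notin X\setminus U\Leftrightarrow X\setminus U$ is discrete.

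For part 2, the forward direction is immediate: $U\approx H$ gives the cardinality equality, and part 1 gives $U\cap\{b\}=H\cap\{b\}$. For the reverse direction, the Note in the introduction supplies both halves of the needed homeomorphism. The homeomorphism $U\approx H$ follows directly: if $b$ lies in both, they are Fort spaces of the same infinite cardinality; if $b$ lies in neither, they are discrete sets of the same cardinality. For the complement homeomorphism the hypothesis ${\rm card}(U)<{\rm card}(X)$ becomes essential; standard cardinal arithmetic forces ${\rm card}(X\setminus U)={\rm card}(X)={\rm card}(X\setminus H)$ (the sum of two cardinals each strictly less than the infinite cardinal ${\rm card}(X)$ cannot reach ${\rm card}(X)$), and because $X\setminus U$ and $X\setminus H$ agree on containment of $b$, the Note again yields the homeomorphism. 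The main delicate point is tracking where each hypothesis is actually used: the condition on complements in part 1 is only needed when $U$ is finite, and the strict inequality ${\rm card}(U)<{\rm card}(X)$ in part 2 serves precisely to keep both complements large enough for the Note to apply symmetrically.
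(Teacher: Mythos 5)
Your proposal is correct and follows essentially the same route as the paper: part 1 via the observation that $b$ is the unique possible non-isolated point of a subspace (passing to the infinite complements when $U$ is finite), and part 2 by building the homeomorphisms from equal cardinality plus agreement on $b$, with the hypothesis ${\rm card}(U)<{\rm card}(X)$ forcing ${\rm card}(X\setminus U)={\rm card}(X)={\rm card}(X\setminus H)$ and part 1 supplying the forward direction. No gaps.
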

%%%%%%%%%%%%%%%%%%%%%%%%%%%%%%%%%%%
\begin{proof}
1) First suppose $U$ is infinite, so $V$ is infinite too. Since $b$ is the unique limit point of any infinite subset of $X$,
$U$ contains a limit point if and only if $b\in U$ on the other hand $U$ contains a limit point
if and only if $V$ contains a limit point which means $b\in V$ in its turn.
\\
Now suppose $U$ is finite, thus $X\setminus U$ is infinite and using a similar method described above,
we have $b\in X\setminus U$ if and only if $b\in X\setminus V$ which completes the proof.
\\
2) Suppose ${\rm card}(U)={\rm card}(H)$ and $U\cap\{b\}=H\cap\{b\}$, then
${\rm card}(U\setminus\{b\})={\rm card}(H\setminus\{b\})$,
thus there exists bijection $f:U\setminus\{b\}\to H\setminus\{b\}$. If $b\notin U$,
then $b\notin H$ and $f:U\setminus\{b\}=U\to H\setminus\{b\}=H$
is a homeomorphism (of discrete spaces) too. If $b\in U$, then $b\in H$ too
and $\tilde{f}:U\to H$ with $\tilde{f}\restriction_{U\setminus\{b\}}=f$
and $\tilde{f}(b)=b$ is a homeomorphism of infinite Fort spaces
($U$ and $H$ with particular point $b$). So $U\approx H$.
\\
On the other hand if ${\rm card}(U)={\rm card}(H)<{\rm card}(X)$, then
${\rm card}(X\setminus U)={\rm card}(X\setminus H)={\rm card}(X)$. Also if
 $U\cap\{b\}=H\cap\{b\}$, then  $(X\setminus U)\cap\{b\}=(X\setminus H)\cap\{b\}$. So if
 ${\rm card}(U)={\rm card}(H)<{\rm card}(X)$ and $U\cap\{b\}=H\cap\{b\}$, then
 ${\rm card}(X\setminus U)={\rm card}(X\setminus H)$ and
 $(X\setminus U)\cap\{b\}=(X\setminus H)\cap\{b\}$ which shows
$X\setminus U\approx X\setminus H$ by the above argument.
\\
Use item (1) to complete the proof of (2).
\end{proof}
%%%%%%%%%%%%%%%%%%%%%%%%%%%%%%%%%%%
Note that if there exists a $C-(X,D,\lambda)$, Top--design of type $i$, then there exists
$U\approx D$ with $C\subseteq U$, so ${\rm card}(C)\leq {\rm card}(U)={\rm card}(D)$.
Therefore ${\rm card}(C)=\min({\rm card}(D),{\rm card}(C))\leq{\rm card}(X)$.
%%%%%%%%%%%%%%%%%%%%%%%%%%%%%%%%%%%
\begin{theorem}\label{salam10}
Regarding 1st type of Top--designs for
nonempty subsets $C,D$ of $X$ we have:
\\
a. suppose $b\notin C\cup D$:
	\begin{itemize}
	\item[a$_1$.] if $C$ is finite, then there is not any
		$C-(X,D,\lambda)$, Top--design of type 1,
	\item[a$_2$.]  if  $C$ is infinite
		and ${\rm card}(C)<{\rm card}(X)$,
		then there exist $\lambda$ and a $C-(X,D,\lambda)$
		Top--design of type 1,
	\item[a$_3$.] if
		${\rm card}(C)={\rm card}(D)={\rm card}(X)$,
		then there exist $\lambda$ and a $C-(X,D,\lambda)$
		Top--design of type 1 if and only if $D=X\setminus\{b\}$,
	\end{itemize}
b. if $b\in C\setminus D$, then there is not any $C-(X,D,\lambda)$, Top--design of type 1,
\\
c. suppose $b\in D$:
	\begin{itemize}
	\item[c$_1$.] for finite $C$ there exist $\lambda$ and a $C-(X,D,\lambda)$
		Top--design of type 1 if and only if ${\rm card}(C)+2\leq{\rm card}(D)$,
	\item[c$_2$.] if  $C$ is infinite
		and ${\rm card}(C)=\min({\rm card}(D),{\rm card}(C))<{\rm card}(X)$,
		then there exist $\lambda$ and a $C-(X,D,\lambda)$
		Top--design of type 1,
	\item[c$_3$.] if
		${\rm card}(C)={\rm card}(D)={\rm card}(X)$,
		then there exist $\lambda$ and a $C-(X,D,\lambda)$
		Top--design of type 1 if and only if $D=X$.
	\end{itemize}
\end{theorem}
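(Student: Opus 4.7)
The strategy is to use Lemma~\ref{taha10} to translate~(II) and the relation ``$E\approx C$'' into combinatorial data: cardinalities and membership of~$b$. A subset $B$ realizes~(II) iff $\mathrm{card}(B)=\mathrm{card}(D)$ and $b\in B\Leftrightarrow b\in D$ (with a refinement in the doubly large case $\mathrm{card}(D)=\mathrm{card}(X\setminus D)=\mathrm{card}(X)$), and for infinite~$C$ the relation $E\approx C$ is equivalent to $\mathrm{card}(E)=\mathrm{card}(C)$ together with $b\in E\Leftrightarrow b\in C$; for finite~$C$, which is automatically discrete, only cardinality matters.

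For the existence direction in each subcase (a$_2$, the ``if'' of a$_3$, the ``if'' of c$_1$, c$_2$, and the ``if'' of c$_3$), I would take $\mathbb A$ to be \emph{all} $B\subseteq X$ satisfying~(II). The group of bijections $X\to X$ fixing~$b$ consists of self-homeomorphisms, it permutes~$\mathbb A$, and by the Note it acts transitively on subsets of any fixed homeomorphism type; hence $\mathrm{card}(\{B\in\mathbb A:E\subseteq B\})$ is constant in~$E$ and defines the needed~$\lambda$. To see $\lambda\neq 0$, I would extend a given~$E$ to an admissible~$B$ by padding with points of~$X$ while inserting or removing~$b$ to match~$D$; the subcase hypotheses are exactly what makes enough room for the padding.

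For the non-existence direction, I would produce one $E\approx C$ with \emph{no} admissible~$B$. In (a$_1$) and~(b), take $E\ni b$ of the right cardinality; Lemma~\ref{taha10}(1) applied to~(II) gives $b\notin B$, so no $B$ contains~$E$. In (a$_3$) with $D\neq X\setminus\{b\}$, take $E=X\setminus\{b\}$: forcing $E\subseteq B$ and $b\notin B$ would make $B=X\setminus\{b\}$, but then $X\setminus B=\{b\}\not\approx X\setminus D$. In (c$_3$) with $D\neq X$ I take $E=X$ or $E=X\setminus\{b\}$ according to whether $b\in C$, reaching the same contradiction.

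The main obstacle is (c$_1$) with finite~$D$. For $|D|=|C|$ an $E\not\ni b$ forces $B=E\not\ni b$, contradicting $b\in B$. The delicate case is $|D|=|C|+1$: here some $E$'s \emph{do} have admissible supersets, so I must compare counts rather than drive the count to zero. For $E_1\not\ni b$ of size~$|C|$ the only admissible $B\supseteq E_1$ is $B=E_1\cup\{b\}$, forcing $\lambda\in\{0,1\}$ for every choice of~$\mathbb A$; but for $E_2\ni b$ there are $\mathrm{card}(X)$ admissible supersets. Each value $\lambda=0$ and $\lambda=1$ leads to a contradiction---the former forces $\mathbb A=\emptyset$, while the latter forces every admissible superset of every $E_2$ into~$\mathbb A$, giving the count $\mathrm{card}(X)\neq 1$---so the threshold $|D|\geq|C|+2$ is sharp.
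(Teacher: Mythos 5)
Your overall architecture coincides with the paper's: the block set is $\mathbb{W}=\{B\subseteq X: B\approx D\wedge X\setminus B\approx X\setminus D\}$, Lemma~\ref{taha10} converts conditions (II) and ``$E\approx C$'' into cardinalities plus membership of $b$, and your non-existence arguments (a$_1$, b, the ``only if'' halves of a$_3$, c$_1$, c$_3$, including the delicate $\mathrm{card}(D)=\mathrm{card}(C)+1$ analysis) are correct and essentially those of the paper. Your symmetry argument --- letting the group of $b$-fixing bijections of $X$ act on $\mathbb{W}$ --- is a genuinely cleaner substitute for the paper's explicit injections in a$_2$ and c$_2$, and it is valid there: when $C$ is infinite with $\mathrm{card}(C)<\mathrm{card}(X)$, any two subsets homeomorphic to $C$ agree on membership of $b$ and have complements of full cardinality, so they lie in one orbit.

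However, the claim this rests on, that the group ``acts transitively on subsets of any fixed homeomorphism type,'' is false, and it fails in exactly the remaining cases. Two finite subsets of equal cardinality are always homeomorphic (both discrete), yet no $b$-fixing bijection carries one containing $b$ to one avoiding $b$; and two discrete subsets of cardinality $\mathrm{card}(X)$ are homeomorphic even when their complements have different cardinalities, so no bijection of $X$ whatsoever relates them. In a$_3$ and c$_3$ the misstatement is harmless, since there $\mathbb{W}$ is the singleton $\{X\setminus\{b\}\}$ or $\{X\}$ and every admissible $E$ lies inside its unique member, giving $\lambda=1$ trivially. In the existence half of c$_1$ it leaves a real gap: $\{E:E\approx C\}$ splits into the two orbits $b\in E$ and $b\notin E$, and you have not shown their counts agree --- your own analysis of $\mathrm{card}(D)=\mathrm{card}(C)+1$ exploits precisely the fact that they can differ ($1$ versus $\mathrm{card}(X)$). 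What is missing is a proof that, when $\mathrm{card}(D)\geq\mathrm{card}(C)+2$, the count for a $(\mathrm{card}(C)+1)$-set containing $b$ equals the count for a $\mathrm{card}(C)$-set containing $b$ (the orbit $b\notin E$ reduces to the former via $b\in\bigcap\mathbb{W}$). For finite $D$ a direct computation gives $\mathrm{card}(X)$ in both cases; for infinite $D$ the paper extends the finite set to a countably infinite $J$ and squeezes the count between $\mathrm{card}(\{B\in\mathbb{W}:J\subseteq B\})=\mathrm{card}(\mathbb{W})$, known from the infinite case, and $\mathrm{card}(\mathbb{W})$ itself. Some argument of this kind must be supplied; it does not follow from the symmetry of $\mathbb{W}$ alone.
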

%%%%%%%%%%%%%%%%%%%%%%%%%%%%%%%%%%%
\begin{proof} Let $\mathbb{W}=\{E\subseteq X:E\approx D\wedge X\setminus E\approx X\setminus D\}$.  By item (1) in Lemma~\ref{taha10},
it's evident that $b\in D$ if and only if
$b\in\bigcup\mathbb{W}$ (resp.
$b\in\bigcap\mathbb{W}$).
\\
$\underline{a_1}$) Choose $k\in C$, if $\mathbb A$ is a $C-(X,D,\lambda)$, Top--design of type 1, then ${\mathbb A}\subseteq{\mathbb W}$ and $\mathbb A$ is a
$(C\setminus\{k\})\cup\{b\}-(X,D,\lambda)$, Top--design of type 1 too,
which is a contradiction since $b\notin \bigcup{\mathbb W}$.
\\
$\underline{a_2}$) We have the following sub--cases:
\\
$\bullet$ ${\rm card}(C)\leq{\rm card}(D)<{\rm card}(X)$. In this case by item (2) in Lemma~\ref{taha10} we have
	${\mathbb W}=\{E\subseteq X\setminus\{b\}:
	{\rm card}(E)={\rm card}(D)\}$. Using ${\rm card}(X\setminus\{b\})={\rm card}
	(X\setminus(C\cup\{b\}))$ for ${\mathbb W}'=\{E\subseteq
	X\setminus(C\cup\{b\}): {\rm card}(E)={\rm card}(D)\}$ we have
	${\rm card}({\mathbb W})={\rm card}({\mathbb W}')$. It's evident
	that $\mathop{{\mathbb W}'\to{\mathbb W}}\limits_{E\mapsto E\cup C}$
	is one--to--one, so ${\rm card}({\mathbb W}')\leq
	{\rm card}(\{F\in{\mathbb W}:C\subseteq F\})\leq {\rm card}({\mathbb W})$.
	Thus ${\rm card}(\{F\in{\mathbb W}:C\subseteq F\})
	= {\rm card}({\mathbb W})$.
	Since $C$  is infinite and $b\notin C$, for all subset $E$ of $X$ with
	$C\approx E$ we have ${\rm card}(C)={\rm card}(E)$ and $b\notin E$,
	so by a similar method described for $C$ we have
	${\rm card}(\{F\in{\mathbb W}:E\subseteq F\})= {\rm card}({\mathbb W})$.
	Hence $\mathbb W$ is a $C-(X,D,{\rm card}({\mathbb W}))$ Top--design
	of type 1.
\\
$\bullet$ ${\rm card}(C)<{\rm card}(D)={\rm card}(X)$.
	In this case by Lemma~\ref{taha10}, ${\mathbb W}=\{E\subseteq X\setminus\{b\}:
	{\rm card}(E)={\rm card}(D)\wedge
	{\rm card}(X\setminus E)={\rm card}(X\setminus D)\}$. Since ${\rm card}(C)<{\rm card}(D)$
	and $C,D$ carry discrete topologies thus $C$ can be embedded in $D$ and without any loss of generality we may suppose
	$C\subseteq D$. By infiniteness of $D$, at least one of the sets $D\setminus C$ or $C$ is infinite
	and
	\begin{eqnarray*}
	{\rm card}(C) & < & {\rm card}(X)={\rm card}(D) \\
	& = & {\rm card}(C)+{\rm card}(D\setminus C) \\
	& = & \max({\rm card}(C),{\rm card}(D\setminus C))
	\end{eqnarray*}
	so we have $\max({\rm card}(C),{\rm card}(D\setminus C))={\rm card}(D\setminus C)={\rm card}(X)$.
	Since $2{\rm card}(D\setminus C)={\rm card}(D\setminus C)$, we may choose
	$H\subseteq D\setminus C$ with
	\[{\rm card}(H)={\rm card}
	(D\setminus C)\setminus H(={\rm card}(D\setminus (C\cup H))={\rm card}(D\setminus C)={\rm card}(X))\:.\]
Let $K=\{F\subseteq D\setminus (H\cup
	C): {\rm card}(F\cup\{b\})={\rm card}(X\setminus D)\}$, and consider the following claim:
\\
{\it Claim.} For $F\in K$ we have $C\subseteq X\setminus(F\cup\{b\})\in\mathbb{W}$. Suppose $F\in K$, so
$F\subseteq D\setminus (H\cup C)$ so $H\subseteq X\setminus(F\cup\{b\})\subseteq X$
thus ${\rm card}(X\setminus(F\cup\{b\}))={\rm card}(X)={\rm card}(D)$ and
${\rm card}(X\setminus D)={\rm card}(F\cup\{b\})={\rm card}(X\setminus(X\setminus(F\cup\{b\})))$, therefore
$X\setminus(F\cup\{b\})\in\mathbb{W}$. Also  $F\subseteq D\setminus (H\cup
	C)$ and $b\notin C$ show $C\subseteq X\setminus(F\cup\{b\})$.
\\
Therefore
	\[\eta:\mathop{K\to\{B\in{\mathbb W}:C\subseteq B\}}\limits_{F\mapsto
	X\setminus(F\cup\{b\})\:\:\:\:\:\:\:\:\:\:\:\:\:\:\:\:\:\:\:\:\:\:\:\:}\]
	is well--defined and clearly one--to--one.
\\
Thus ${\rm card}(K)\leq{\rm card}(\{B\in{\mathbb
	W}:C\subseteq B\})\leq{\rm card}({\mathbb W})$,
	however using ${\rm card}(D\setminus (H\cup C))={\rm card}(
	X\setminus\{b\})$ we have:
	\begin{eqnarray*}
	{\rm card}({\mathbb W}) & \leq & {\rm card}(\{E\subseteq X\setminus\{b\}:
	{\rm card}(E) =  {\rm card}(X\setminus (D\cup \{b\}))\}) \\
	& = & {\rm card}(\{F\subseteq D\setminus (H\cup C):
		{\rm card}(F)={\rm card}(X\setminus (D\cup\{b\}))\}) \\
	& = & {\rm card}(K)
	\end{eqnarray*}
	which leads to ${\rm card}(K)={\rm card}(\{B\in{\mathbb W}:C\subseteq
	B\})={\rm card}({\mathbb W})$.
	\\
	For $E\subseteq X$ with $E\approx C$ (so $b\notin E$), we have $D'=(D\setminus C)\cup E
	\in {\mathbb W}$ and ${\mathbb W}=\{E\subseteq X\setminus\{b\}:
	{\rm card}(E)={\rm card}(D')\wedge
	{\rm card}(X\setminus E)={\rm card}(X\setminus D')\}$.
	Using a similar method described above, we have
	${\rm card}(\{B\in{\mathbb W}:E\subseteq B\})={\rm card}({\mathbb W})$,
	thus $\mathbb W$ is a $C-(X,D,{\rm card}({\mathbb W}))$ Top--design
	of type 1.
\\
$\underline{a_3}$) In this case if ${\mathbb A}$ is a $C-(X,D,\lambda)$ Top--design
of type 1, then there exists $B\in{\mathbb A}$ with $X\setminus\{b\}\subseteq B$ (since $C\approx X\setminus\{b\}$), which leads to $D=X\setminus\{b\}$,
and ${\mathbb W}=\{X\setminus\{b\}\}$ is a
$C-(X,D,1)$ Top--design
of type 1.
\\
$\underline{b}$) Use the fact that if $b\notin D$, then for all $B\subseteq X$ with
$D\approx B$ and $X\setminus D\approx X\setminus B$ we have $b\notin B$,
and in particular $C\nsubseteq B$.
\\
$\underline{c_1}$)
First suppose $\mathbb A$ is a
	$C-(X,D,\lambda)$ Top--design of type 1 and $D$ is finite,
	then for all subsets $H$ of $X$ with ${\rm card}(H)={\rm card}(C)$,
	there exists $B\in{\mathbb A}$ with $H\subseteq B$, however we
	may assume $b\notin H$, using $b\in B$ we have ${\rm card}(H)\leq
	{\rm card}(B\setminus\{b\})={\rm card}(D)-1$. Hence
	${\rm card}(C)+1\leq{\rm card}(D)$. If ${\rm card}(C)+1={\rm card}(D)$
	then any subset of $X\setminus\{b\}$ with ${\rm card}(C)$ elements
	occurs in just one element of $\mathbb A$ and ${\mathbb A}=\{S\cup
	\{b\}:S\subseteq X\setminus\{b\}\wedge{\rm card}(S)={\rm card}(C)\}$
	now choose a subset $J$ of $X\setminus\{b\}$ with ${\rm card}(C)-1$
	elements, then infinite elements of $\mathbb A$ contain $J\cup\{b\}
	(\approx C)$ which is in contradiction with $\lambda=1$, so
	${\rm card}(C)+1<{\rm card}(D)$ and ${\rm card}(C)+2\leq{\rm card}(D)$.
\\
In order to complete the proof, we have the following cases:
\\
{\it Case 1.} $X$ is uncountable and $D$ is infinite. In this case choose infinite countable subset
	$I$ of $D\setminus\{b\}$.
By the proof of (a$_2$) for
\[\mathbb{W}_{-b}=\{E\subseteq X:E\approx D\setminus\{b\}\wedge X\setminus E\approx X\setminus (D\setminus\{b\})\}\]
	is a
$I-(X,D\setminus\{b\},{\rm card}({\mathbb W}_{-b}))$
Top--design of first type. We show $\mathbb W$ is a
$C-(X,D,{\rm card}({\mathbb W}))$
Top--design of first type. Consider $H\subseteq X$ with $H\approx C$.
There exists $J\subseteq X\setminus\{b\}$ with $H\setminus\{b\}\subseteq J$ and $J\approx I$ so
\begin{eqnarray*}
{\rm card}({\mathbb W}_{-b}) & \geq & {\rm card}(\{B\in{\mathbb W}_{-b}:H\setminus\{b\}\subseteq B\}) \\
& \geq & {\rm card}(\{B\in{\mathbb W}_{-b}:J\subseteq B\})={\rm card}({\mathbb W}_{-b})
\end{eqnarray*}
therefore ${\rm card}(\{B\in{\mathbb W}_{-b}:H\setminus\{b\}\subseteq B\})={\rm card}({\mathbb W}_{-b})$. Considering
bijection $\eta:\mathop{{\mathbb W}_{-b}\to{\mathbb W}}\limits_{B\mapsto B\cup\{b\}}$, and $b\in\bigcap {\mathbb W}$
we have
${\rm card}(\{B\in{\mathbb W}:H\setminus\{b\}\subseteq B\})={\rm card}(\{B\in{\mathbb W}_{-b}:H\setminus\{b\}\subseteq B\})={\rm card}({\mathbb W}_{-b})={\rm card}({\mathbb W})$
which leads to
${\rm card}(\{B\in{\mathbb W}:H\subseteq B\})={\rm card}({\mathbb W})$ and
$\mathbb W$ is a
$C-(X,D,{\rm card}({\mathbb W}))$ Top--design of first type.
\\
{\it Case 2.} $X$, $D$ and $X\setminus D$   are infinite countable. In this case we may
	suppose $X\setminus\{b\}=\{p_n:n\geq1\}$ and
	$D=\{p_{2n}:n\geq1\}\cup\{b\}$ with distinct $p_n$s.
	Let ${\mathbb A}=\{X\setminus\{p_{2k+1}:k\geq s\}:s\geq1\}$,
	then ${\mathbb A}$ is a $C-(X,D,\aleph_0)$ Top--design of type 1,
\\
{\it Case 3.} $X$ and $D$ are infinite countable and $X\setminus D\neq\varnothing$ is finite. In this case
	$\mathbb W$ is infinite countable and a $C-(X,D,\aleph_0)$ Top--design of type 1.
\\
{\it Case 4.} $X=D$ is infinite countable. In this case
	${\mathbb W}=\{X\}$ is a $C-(X,D,1)$ Top--design of type 1.
\\
{\it Case 5.} $D$ is finite and ${\rm card}(C)+2\leq{\rm card}(D)$.
	In this case ${\rm card}({\mathbb W})={\rm card}(X)$
	(since for infinite set $X$ we have ${\rm card}(X)={\rm card}({\mathcal P}_{fin}(X))$,
	where ${\mathcal P}_{fin}(X))$ is the collection of all finite subsets of $X$)
	and
	${\mathbb W}$ is a
	$C-(X,D,{\rm card}(X))$ Top--design of type 1.
\\
$\underline{c_2}$) In this case by the proof of (a$_2$), $\mathbb W$ is a $C\setminus\{b\}-(X,D\setminus\{b\},{\rm card}({\mathbb W}))$ Top-design of type 1, using $b\in\bigcap{\mathbb W}$, shows that $\mathbb W$ is a $C-(X,D,{\rm card}({\mathbb W}))$ Top-design of type 1 too.
\\
$\underline{c_3}$) Use a similar method described in the proof of (a$_3$).
\end{proof}
%%%%%%%%%%%%%%%%%%%%%%%%%%%%%%%%%%%
%%%%%%%%%%%%%%%%%%%%%%%%%%%%%%%%%%%
\begin{lemma}\label{taha20}
For
nonempty subsets $C,D$ of $X$,  $C$ can be embedded into $D$ if and only if
\begin{center}
``$C$ is finite or $b\notin C\setminus D$'', and
``${\rm card}(C)\leq{\rm card}(D)$''.
\end{center}
\end{lemma}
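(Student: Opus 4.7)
The plan is to translate the abstract statement ``$C$ embeds into $D$'' (meaning as subspaces of $X$) into the concrete combinatorial data recorded in the Note at the start of Section~1. Recall that every subspace $Y$ of $X$ is either finite discrete, infinite discrete (when $b\notin Y$), or an infinite Fort space with particular point $b$ (when $b\in Y$), and that two such spaces are homeomorphic if and only if they have the same cardinality and either both contain $b$ or both miss it.

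For the forward direction, assume there is a topological embedding $f:C\to D$. The inequality ${\rm card}(C)\leq{\rm card}(D)$ is immediate. To establish the second condition I may assume $C$ is infinite and $b\in C$, and I must deduce $b\in D$. Since $C$ is infinite with $b\in C$, its subspace topology makes $b$ its unique non-isolated point. The image $f(C)\subseteq D\subseteq X$ is homeomorphic to $C$, hence infinite and possessing a non-isolated point; but in any subspace of $X$ the only possible non-isolated point is $b$, so $b\in f(C)\subseteq D$.

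For the reverse direction I split into three cases and exhibit an embedding in each. If $C$ is finite, any injection of $C$ into $D$ (which exists by the cardinality hypothesis) has finite discrete image and is automatically a homeomorphism onto its image. If $C$ is infinite and $b\notin C$, then $C$ is discrete and the cardinality condition forces $D$ to be infinite, so ${\rm card}(D\setminus\{b\})={\rm card}(D)\geq{\rm card}(C)$, and any injection $C\to D\setminus\{b\}$ has discrete image of the correct cardinality. If $C$ is infinite and $b\in C$, the hypothesis $b\notin C\setminus D$ gives $b\in D$; I pick an injection $g:C\setminus\{b\}\to D\setminus\{b\}$ and extend it to $f:C\to D$ by $f(b)=b$, producing an infinite subset $f(C)\subseteq D$ containing $b$ and of cardinality ${\rm card}(C)$, which the Note identifies as a Fort space with particular point $b$ homeomorphic to $C$ via $f$.

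The only nontrivial step is the forward direction when $C$ is infinite with $b\in C$: one must extract a purely topological invariant of the subspace $C$ that singles out the point $b$, and ``unique non-isolated point'' does the job. Everything else reduces to cardinal arithmetic combined with the homeomorphism classification packaged in the Note.
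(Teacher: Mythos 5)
Your proof is correct and its forward direction is essentially the paper's argument: both isolate a topological invariant (the paper's ``unique limit point'', your ``unique non-isolated point'') that forces $b$ into any homeomorphic copy of an infinite $C$ containing $b$, hence into $D$. You additionally write out the converse construction case by case, which the paper's proof omits entirely (presumably as routine, given its Note and Lemma~\ref{taha10}), so your version is the more complete of the two.
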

%%%%%%%%%%%%%%%%%%%%%%%%%%%%%%%%%%%
\begin{proof}
Suppose $C$ can be embedded in $D$ and choose $E\subseteq D$ with $E\approx C$, so
${\rm card}(C)={\rm card}(E)\leq{\rm card}(D)$. If $C$ is infinite and $b\in C$ then any
subset of $X$ homeomorphic with $C$ contains $b$, thus $b\in E(\subseteq D)$ and $b\notin C\setminus D$.
\end{proof}
%%%%%%%%%%%%%%%%%%%%%%%%%%%%%%%%%%%
\begin{theorem}\label{salam20}
For
nonempty subsets $C,D$ of $X$, there exist $\lambda$ and a $C-(X,D,\lambda)$, Top--design of type 2 if and only if
$C$ can be embedded into $D$.
\end{theorem}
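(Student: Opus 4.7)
For the necessity direction, if $\mathbb{A}$ is a $C-(X,D,\lambda)$ Top--design of type 2, then taking $E=C$ in condition (III) together with $\lambda\geq 1$ yields some $B\in\mathbb{A}$ with $C\subseteq B$, and (I) gives $B\approx D$; hence $C$ sits as a subspace of a copy of $D$, so $C$ can be embedded into $D$.

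For sufficiency, assume $C$ embeds into $D$. By Lemma~\ref{taha20} this amounts to ${\rm card}(C)\leq{\rm card}(D)$ together with ``$C$ is finite or $b\notin C\setminus D$''. The plan is to take $\mathbb{A}=\mathbb{W}:=\{B\subseteq X:B\approx D\}$ as the design: condition (I) is built into the definition, and the real work is to verify (III), namely that ${\rm card}(\{B\in\mathbb{W}:E\subseteq B\})$ takes the same nonzero cardinal value $\lambda$ for every $E\subseteq X$ with $E\approx C$.

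The central tool is that a self-homeomorphism of the infinite Fort space $X$ is precisely a bijection $X\to X$ fixing $b$, and conversely any such bijection is continuous in both directions. Given two candidates $E_1,E_2\approx C$ of matching cardinality, matching $b$-membership, and matching complement cardinality, I can assemble a self-homeomorphism $\phi:X\to X$ with $\phi(E_1)=E_2$ by piecing together a bijection $E_1\to E_2$ (sending $b$ to $b$ when it lies in both) with a bijection $X\setminus E_1\to X\setminus E_2$ (sending $b$ to $b$ when it lies in neither); then $B\mapsto\phi(B)$ gives a bijection $\{B\in\mathbb{W}:E_1\subseteq B\}\leftrightarrow\{B\in\mathbb{W}:E_2\subseteq B\}$, so the two counts coincide. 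The $b$-memberships agree automatically for infinite $C$ by the Note and Lemma~\ref{taha10}(1), and whenever ${\rm card}(C)<{\rm card}(X)$ both complements have cardinality ${\rm card}(X)$, so the orbit argument applies uniformly. The boundary case ${\rm card}(C)={\rm card}(D)={\rm card}(X)$ is then treated by a direct enumeration in the spirit of Theorem~\ref{salam10}(a$_3$) and (c$_3$), where $\mathbb{W}$ turns out to be small enough that $\lambda=1$ suffices.

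The main obstacle I anticipate is the finite-$C$ case, where $E\approx C$ may or may not contain $b$ and these possibilities form two distinct orbits that the homeomorphism argument does not relate. For this I plan to compute both cardinalities directly: parametrising admissible $B$ by the piece $B\setminus E$ (adjoining $b$ when it is required in $B$ but is missing from $E$), each admissible count reduces to the cardinality of a family of subsets of a ${\rm card}(X)$-sized set of a prescribed target cardinality. Lemma~\ref{taha10}(2) together with the identity ${\rm card}(X)={\rm card}(X\setminus\{b\})$ then shows these cardinalities agree across the two orbits, completing the verification of (III) and producing the desired $\lambda={\rm card}(\mathbb{W})$.
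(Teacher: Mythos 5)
Your necessity argument and your overall strategy (take the family of all homeomorphic copies of $D$ and show the replication number is constant using self--homeomorphisms of $X$, i.e.\ bijections fixing $b$) match the paper's in spirit, and for infinite $C$ with ${\rm card}(C)<{\rm card}(X)$ your orbit argument is a clean substitute for the paper's detour through Theorem~\ref{salam10}. The genuine gap is the boundary case ${\rm card}(C)={\rm card}(D)={\rm card}(X)$. There you assert that $\mathbb{W}=\{B\subseteq X:B\approx D\}$ ``turns out to be small enough that $\lambda=1$ suffices,'' but this fails unless $D$ is $X$ or $X\setminus\{b\}$. Take $X$ uncountable and $C=D=X\setminus\{b,c\}$ for some $c\neq b$: then $\mathbb{W}=\{B\subseteq X\setminus\{b\}:{\rm card}(B)={\rm card}(X)\}$ has $2^{{\rm card}(X)}$ members; the copy $E_1=X\setminus\{b\}$ of $C$ lies in exactly one of them, while a copy $E_2$ of $C$ with ${\rm card}(X\setminus E_2)={\rm card}(X)$ lies in $2^{{\rm card}(X)}$ of them, so $\mathbb{W}$ satisfies (I) but (III) for no $\lambda$. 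The missing idea is to abandon the full family here and use the singleton $\mathbb{A}=\{(X\setminus\{b\})\cup(D\cap\{b\})\}$: this block is homeomorphic to $D$ (same cardinality and same $b$--membership), and since $b\notin C\setminus D$ every $E\approx C$ of full cardinality is contained in it, giving $\lambda=1$. This is exactly the paper's third case.

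A second soft spot is your finite--$C$ case. You claim the two orbits ($E\ni b$ and $E\not\ni b$) yield equal counts by direct enumeration, but when $D$ is infinite with $b\notin D$, every $B\approx D$ is an infinite discrete subspace of $X$ and hence omits $b$; so any $E\approx C$ with $b\in E$ lies in zero blocks while the others lie in infinitely many, and no family satisfying (I) can repair this, since (III) quantifies over all subsets of cardinality ${\rm card}(C)$. To be fair, the paper's own proof of this sub--case (via the map $\eta(E)=E\cup C$) silently skips the same configuration, so this is really a defect in the statement rather than something you alone overlooked --- but your plan does not close it either, and the assertion that Lemma~\ref{taha10}(2) makes the two orbit counts agree is not correct as written.
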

%%%%%%%%%%%%%%%%%%%%%%%%%%%%%%%%%%%
\begin{proof}
If we can not embed $C$
into $D$ it's evident that there is not any $C-(X,D,\lambda)$,
Top--design of type 2.
\\
Conversely suppose $C$ can be embedded in $D$, so by Lemma~\ref{taha20} ${\rm card}(C)\leq{\rm card}(D)$ and ``$C$ is finite
or $b\notin C\setminus D$''. Let ${\mathbb L}=\{E\subseteq X:E\approx D\}$.
We have the following cases:
\\
$\bullet$ ${\rm card}(C)\leq{\rm card}(D)$ and $C$ is finite. In this case
	$\mathbb L$ is a $C-(X,D,\lambda)$ Top--design of type 2 with:
	\[\lambda=\left\{\begin{array}{lc} 1 & {\rm card}(C)={\rm card}(D)\:, \\
	{\rm card}(\{E\subseteq X:{\rm card}(E)={\rm card}(D)\}) & {\rm otherwise}
	\: . \end{array}\right.\]
For this aim use the fact that $\eta:\{E\subseteq X\setminus C:{\rm card}(E)={\rm card}(D)\}\to\{E\subseteq X:{\rm card}(E)={\rm card}(D)\}$
with $\eta(E)=E\cup C$ is bijective.
\\
$\bullet$ ${\rm card}(C)=\min({\rm card}(C),{\rm card}(D))<{\rm card}(X)$ and $b\notin C\setminus D$. In this case by Theorem~\ref{salam10} there exists $\lambda$ and  $C-(X,D,\lambda)$,
Top--design of type 1, so it is a $C-(X,D,\lambda)$,
Top--design of type 2 too.
\\
$\bullet$ ${\rm card}(C)=\min({\rm card}(C),{\rm card}(D))={\rm card}(X)$ and $b\notin C\setminus D$. In this case
$\mathbb{A}=\{(X\setminus\{b\})\cup(D\cap \{b\})\}$ is a $C-(X,D,1)$ Top--design of type 2.
\end{proof}
%%%%%%%%%%%%%%%%%%%%%%%%%%%%%%%%%%%
\begin{theorem}\label{salam30}
Regarding 3rd type of Top--designs for
nonempty subsets $C,D$ of $X$, there exist $\lambda$ and a $C-(X,D,\lambda)$, Top--design of type 3 if and only if $b\notin C\setminus D$,
${\rm card}(C\setminus\{b\})\leq{\rm card}(D\setminus\{b\})$ and
${\rm card}(X\setminus(D\cup\{b\}))\leq{\rm card}(X\setminus(C\cup\{b\}))$.
\end{theorem}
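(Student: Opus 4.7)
The plan is to prove the two implications separately. For \emph{necessity}, suppose $\mathbb A$ is a $C-(X,D,\lambda)$ Top--design of type 3. Since $C\in \mathbb{E}:=\{E\subseteq X:E\approx C\wedge X\setminus E\approx X\setminus C\}$, condition (IV) forces $\lambda\geq 1$ and yields some $B\in\mathbb A$ with $C\subseteq B$, while (II) gives $B\approx D$ and $X\setminus B\approx X\setminus D$. Lemma~\ref{taha10}(1) tells us $b\in B\Leftrightarrow b\in D$, so from $C\subseteq B$ it follows that $b\in C$ forces $b\in D$, i.e.\ $b\notin C\setminus D$; from $C\setminus\{b\}\subseteq B\setminus\{b\}$ we get ${\rm card}(C\setminus\{b\})\leq{\rm card}(D\setminus\{b\})$; and from $X\setminus(B\cup\{b\})\subseteq X\setminus(C\cup\{b\})$ we get ${\rm card}(X\setminus(D\cup\{b\}))\leq{\rm card}(X\setminus(C\cup\{b\}))$.

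For \emph{sufficiency}, I use the natural candidate $\mathbb W=\{B\subseteq X:B\approx D\wedge X\setminus B\approx X\setminus D\}$ and aim to show $\mathbb W$ is a $C-(X,D,\lambda)$ Top--design of type 3 with $\lambda={\rm card}(\{B\in\mathbb W:C\subseteq B\})$. Condition (II) is immediate from the definition of $\mathbb W$, so only (IV) requires work. \emph{Constancy} of the count across $\mathbb E$ follows from the symmetry of Fort spaces: for any $E,E'\in\mathbb E$, Lemma~\ref{taha10} produces a bijection $\phi:X\to X$ fixing $b$ with $\phi(E)=E'$, and since every bijection of $X$ fixing $b$ is a self--homeomorphism, $\phi$ permutes $\mathbb W$ and induces a bijection between $\{B\in\mathbb W:E\subseteq B\}$ and $\{B\in\mathbb W:E'\subseteq B\}$.

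The main obstacle is proving \emph{non--emptiness}, i.e.\ producing at least one $B\in\mathbb W$ with $C\subseteq B$. Writing $c={\rm card}(C\setminus\{b\})$, $d={\rm card}(D\setminus\{b\})$, $\bar c={\rm card}(X\setminus(C\cup\{b\}))$, $\bar d={\rm card}(X\setminus(D\cup\{b\}))$, the identity $c+\bar c=d+\bar d={\rm card}(X\setminus\{b\})$ holds automatically, and the hypotheses give $c\leq d$ and $\bar d\leq\bar c$. I need to partition the set $X\setminus(C\cup\{b\})$ (of cardinality $\bar c$) into disjoint pieces $P$ and $Q$ with ${\rm card}(P)+c=d$ and ${\rm card}(Q)=\bar d$; then
\[
B=(C\setminus\{b\})\cup P\cup(\{b\}\cap D)
\]
will be the desired block, with $b\notin C\setminus D$ guaranteeing that the clause $\{b\}\cap D$ is consistent with $C\subseteq B$. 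The existence of such a partition reduces to a short case split on whether $\bar c$ is finite or infinite (and further whether $d$ is finite or infinite), using in each case the identity above and the two given cardinal inequalities to ensure the target sizes are simultaneously realisable inside a set of size $\bar c$.
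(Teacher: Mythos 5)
Your proof is correct, and for sufficiency it lands on the same object as the paper ($\mathbb W$ together with $\lambda={\rm card}(\{B\in\mathbb W:C\subseteq B\})$), but the route differs in three places worth noting. For necessity the paper argues by a four-way case split on which of the three conditions fails, exhibiting a separate obstruction each time; you instead apply (IV) to $E=C$ itself to extract a single block $B$ with $C\subseteq B$ and read all three conditions off the pair $(B,\,X\setminus B)$ via Lemma~\ref{taha10}(1) --- shorter and arguably cleaner. For the constancy of the count over $\mathbb E$, the paper uses the local replacement map $E\mapsto (E\setminus F)\cup C$ on $\{E\in\mathbb W:F\subseteq E\}$, which as written is not obviously well defined when $C$ meets $E\setminus F$ (for finite $D$ the union can drop in cardinality and fall out of $\mathbb W$); your global bijection $\phi$ of $X$ fixing $b$, which is automatically a self--homeomorphism and hence permutes $\mathbb W$, sidesteps this and is the more robust argument. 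Finally, you explicitly prove non--emptiness of $\{B\in\mathbb W:C\subseteq B\}$ by constructing the block $(C\setminus\{b\})\cup P\cup(\{b\}\cap D)$; the paper omits this verification even though $\lambda$ is required to be a nonzero cardinal, so your case analysis on $\bar c$ and $d$ (which does close in every case, using $c+\bar c=d+\bar d={\rm card}(X\setminus\{b\})$ together with $c\leq d$ and $\bar d\leq\bar c$) fills a genuine gap rather than duplicating work. The only soft spot is that this last case split is sketched rather than written out, but the target sizes for $P$ and $Q$ are always simultaneously realisable, so nothing is at risk.
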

%%%%%%%%%%%%%%%%%%%%%%%%%%%%%%%%%%%
\begin{proof}
Let $\mathbb{W}=\{E\subseteq X:E\approx D\wedge X\setminus E\approx X\setminus D\}$. If
$\mathbb A$ is a $C-(X,D,\lambda)$, Top--design of type 3, then ${\mathbb A}\subseteq{\mathbb W}$
and we have the following cases:
\\
{\it Case 1.} $b\in C\setminus D$. In this case for all $E\in{\mathbb A}(\subseteq{\mathbb W})$,
we have $b\notin E$ and $C\not\subseteq E$
thus $\mathbb A$ is not a  $C-(X,D,\lambda)$, Top--design of type 3.
\\
{\it Case 2.} ${\rm card}(C\setminus\{b\})>{\rm card}(D\setminus\{b\})$, and ``$b\in C\cap D$ or $b\notin C\cup D$''.
In this case we have
${\rm card}(C)>{\rm card}(D)$ so
we can not embed $C$ into $D$ and it's evident that there is not any $C-(X,D,\lambda)$,
Top--design of type 3.
\\
{\it Case 3.} ${\rm card}(C\setminus\{b\})>{\rm card}(D\setminus\{b\})$, $b\in D\setminus C$.
In this case for all $B\in{\mathbb A}$, $b\in B$ and ${\rm card}(C)={\rm card}(C\setminus\{b\})>{\rm card}(D\setminus\{b\})
={\rm card}(B\setminus\{b\})$ so $C\not\subseteq B\setminus\{b\}$ and $C\not\subseteq B$ so $\mathbb A$
is not a  $C-(X,D,\lambda)$, Top--design of type 3.
\\
{\it Case 4.} ${\rm card}(X\setminus(D\cup\{b\}))>{\rm card}(X\setminus(C\cup\{b\}))$. In this case for all
$E\in{\mathbb W}$ we have ${\rm card}(X\setminus(E\cup\{b\}))>{\rm card}(X\setminus( C\cup\{b\}))$, thus $X\setminus(E\cup\{b\})\not\subseteq X\setminus( C\cup\{b\})$ and $C\not\subseteq E$, so  there is not any $C-(X,D,\lambda)$,
Top--design of type 3.
\\
Considering the above cases $b\notin C\setminus D$,
${\rm card}(C\setminus\{b\})\leq{\rm card}(D\setminus\{b\})$ and
${\rm card}(X\setminus(D\cup\{b\}))\leq{\rm card}(X\setminus(C\cup\{b\}))$.
\\
Conversely, suppose $b\notin C\setminus D$,
${\rm card}(C\setminus\{b\})\leq{\rm card}(D\setminus\{b\})$ and
${\rm card}(X\setminus(D\cup\{b\}))\leq{\rm card}(X\setminus(C\cup\{b\}))$, then $\mathbb W$
is a $C-(X,D,\lambda)$, Top--design of type 3 for
$\lambda={\rm card}(\{E\in{\mathbb W}:C\subseteq E\})$
(note that for $F\subseteq X$ with $F\approx C$ and $X\setminus F\approx X
\setminus C$, the map
$\mathop{\{E\in{\mathbb W}:F\subseteq E\}
\to \{E\in{\mathbb W}:C\subseteq E\}}\limits_{\:\:\: \:\:\:  \:\:\:\:\:\: E\mapsto
(E\setminus F)\cup C}$ is bijective).
\end{proof}
%%%%%%%%%%%%%%%%%%%%%%%%%%%%%%%%%%%
\begin{theorem}\label{taha30}
For
nonempty subsets $C,D$ of $X$, there exist $\lambda$ and a $C-(X,D,\lambda)$, Top--design of type 4 if and only if
$C$ can be embedded into $D$.
\end{theorem}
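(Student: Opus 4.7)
The plan is to derive this theorem almost for free from Theorem~\ref{salam20} by observing how conditions (III) and (IV) compare. The hypothesis of (IV) is strictly stronger than that of (III): (IV) demands both $E\approx C$ and $X\setminus E\approx X\setminus C$, while (III) only demands the first. Consequently, any family $\mathbb{A}$ satisfying (I) and (III) automatically satisfies (I) and (IV), so every $C-(X,D,\lambda)$ Top--design of type 2 is in particular a $C-(X,D,\lambda)$ Top--design of type 4.

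With this implication in hand, the converse direction of the biconditional is immediate: if $C$ can be embedded in $D$, Theorem~\ref{salam20} already yields some $\lambda$ and a $C-(X,D,\lambda)$ Top--design of type 2, and the same family serves as the required type 4 design. So for this half there is nothing more to do than to quote Theorem~\ref{salam20} and invoke the observation above.

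For the forward direction, I would take any $C-(X,D,\lambda)$ Top--design $\mathbb{A}$ of type 4 and apply (IV) with the choice $E=C$, where both $E\approx C$ and $X\setminus E\approx X\setminus C$ hold trivially. Since $\lambda$ is by convention a nonzero cardinal, the set $\{B\in\mathbb{A}:C\subseteq B\}$ has cardinality $\lambda\geq 1$ and so is nonempty; any element $B$ of this set satisfies $B\approx D$ by (I) and contains $C$, which via a homeomorphism $B\to D$ exhibits $C$ as embedded in $D$. There is essentially no obstacle to overcome here beyond tracking the asymmetry between (III) and (IV) and using the standing requirement that $\lambda\neq 0$ to guarantee the relevant set is inhabited.
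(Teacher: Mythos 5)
Your proposal is correct and follows essentially the same route as the paper: the existence direction quotes Theorem~\ref{salam20} and uses that condition (IV) is weaker than (III) so a type 2 design is automatically a type 4 design, and the converse direction instantiates (IV) at $E=C$ and uses $\lambda\neq 0$ to produce a block $B\approx D$ containing $C$. Your write-up is in fact slightly more explicit than the paper's about why the implication from type 2 to type 4 holds and why the relevant set of blocks is nonempty.
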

%%%%%%%%%%%%%%%%%%%%%%%%%%%%%%%%%%%
\begin{proof}
If $C$ can be embedded into $D$, then there exist $\lambda>0$ and a $C-(X,D,\lambda)$
Top--design of type 2 like $\mathbb A$ by Theorem\ref{salam20}, so $\mathbb A$ is
a $C-(X,D,\lambda)$Top--design of type 4 too.
\\
Conversely, it's evident that if $\mathbb A$ is a Top--design of type $i$
(for $i=1,2,3,4$), then there exists $E\in \mathbb{A}$ with $C\subseteq E$,
using $E\approx D$ leads us to the fact that $C$can be embedded into $D$.
\end{proof}
%%%%%%%%%%%%%%%%%%%%%%%%%%%%%%%%%%%
\begin{theorem}\label{salam40}
For
nonempty subsets $C,D$ of $X$ the following statements are equivalent:
\begin{itemize}
\item there is not any $C-(X,D,\lambda)$, Top--design of type 2,
\item there is not any $C-(X,D,\lambda)$, Top--design of type 4,
\item ``$C$ is infinite and $b\in C\setminus D$'', or
``${\rm card}(C)>{\rm card}(D)$'',
\item $C$ can not be embedded into $D$.
\end{itemize}
\end{theorem}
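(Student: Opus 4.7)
The proof will be a short bookkeeping exercise that simply assembles the earlier results. The plan is to establish the equivalences $(1)\Leftrightarrow(4)$, $(2)\Leftrightarrow(4)$, and $(3)\Leftrightarrow(4)$; chaining these gives the full statement.

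First, I would observe that $(1)\Leftrightarrow(4)$ is literally the contrapositive of Theorem~\ref{salam20}: that theorem says a Top--design of type 2 exists for some $\lambda$ if and only if $C$ can be embedded into $D$, so negating both sides gives precisely the equivalence between nonexistence of a type~2 Top--design and nonembeddability. Exactly the same argument, with Theorem~\ref{taha30} in place of Theorem~\ref{salam20}, yields $(2)\Leftrightarrow(4)$.

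For $(3)\Leftrightarrow(4)$ I would apply Lemma~\ref{taha20}, which characterises embeddability by the conjunction
\[
\bigl(\text{``}C\text{ is finite or }b\notin C\setminus D\text{''}\bigr)\wedge\bigl({\rm card}(C)\leq{\rm card}(D)\bigr).
\]
Negating this conjunction via De Morgan converts it into the disjunction
\[
\bigl(\text{``}C\text{ is infinite and }b\in C\setminus D\text{''}\bigr)\vee\bigl({\rm card}(C)>{\rm card}(D)\bigr),
\]
which is exactly statement $(3)$; so $C$ cannot be embedded into $D$ if and only if $(3)$ holds.

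There is no genuine obstacle in any step, since all the substantive content has already been absorbed into Theorem~\ref{salam20}, Theorem~\ref{taha30}, and Lemma~\ref{taha20}. The only mild point of care is carrying out the De Morgan negation of the ``finite or not in $C\setminus D$'' clause correctly, to make sure it matches the exact wording of $(3)$.
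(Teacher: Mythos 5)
Your proposal is correct and takes essentially the same route as the paper, whose entire proof is the citation ``Theorems~\ref{salam20}, \ref{taha30} and Lemma~\ref{taha20}''; you have simply spelled out the contrapositives and the De Morgan negation that the paper leaves implicit.
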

%%%%%%%%%%%%%%%%%%%%%%%%%%%%%%%%%%%
\begin{proof}
Theorems~\ref{salam20}, \ref{taha30} and Lemma~\ref{taha20}.
\end{proof}
%%%%%%%%%%%%%%%%%%%%%%%%%%%%%%%%%%%
\section*{Acknowledgement}
The authors are grateful to the research division of the University of Tehran
for the grant which supported this research.
%%%%%%%%%%%%%%%%%%%%%%%%%%%%%%%%%%%
%%%%%%%%%%%%%%%%%%%%%%%%%%%%%%%%%%%
%%%%%%%%%%%%%%%%%%%%%%%%%%%%%%%%%%%
%%%%%%%%

%%%%%%%%%%%%%%%%%%%%%%%%%%%%%%%%%%%%
\[\underline{\SP\SP\SP\SP\SP\SP\SP\SP\SP\SP\SP\SP\SP\SP\SP\SP}\]
\noindent {\small 
{\bf Mehrnaz Pourattar},
Islamic Azad University, Science and Research Branch, 
Tehran, Iran
({\it e-mail}: mpourattar@yahoo.com)
\\
{\bf Fatemah Ayatollah Zadeh Shirazi},
Faculty of Mathematics, Statistics and Computer Science,
College of Science, University of Tehran ,
Enghelab Ave., Tehran, Iran \linebreak
({\it e-mail}: fatemah@khayam.ut.ac.ir)}


\begin{thebibliography}{99}

\bibitem{999} {I. Anderson, \em Combinatorial designs: construction methods,} Ellis Horwood
Ltd., 1990.

\bibitem{111} {F. Ayatollah Zadeh Shirazi and M. Bagherian, Generalized-designs,  \em Missouri Journal of Mathematical Sciences, } {\bf 23, no. 1} (2011), 27--47.

\bibitem{888} {P. J. Cameron, and B. S. Webb, What is an infinite design?, \em Journal of Combinatorial Designs, } {\bf 10:2} (2002), 79--91.

\bibitem{222} {L. A. Steen and J. A. Seebach Jr.,  \em Counterexamples in topology,} New York-Montreal, Holt, Rinehart and Winston Inc., 1970.


\end{thebibliography}
\end{document}